\newtheorem{theorem}{Theorem}[section]
\newtheorem{lemma}[theorem]{Lemma}
\newtheorem{proposition}[theorem]{Proposition}
\newtheorem{corollary}[theorem]{Corollary}
\theoremstyle{definition}
\newtheorem{definition}[theorem]{Definition}
\theoremstyle{remark}
\newtheorem{remark}[theorem]{Remark}
\numberwithin{equation}{section}
\begin{document}
\setcounter{page}{1}

\title[CLUSTER SYSTEM]{ ON CLUSTER SYSTEMS OF TENSOR PRODUCT SYSTEMS OF HILBERT SPACES}

\author[Mukherjee, M.]{Mithun  Mukherjee}

\address{ Department of mathematics and statistics, IISER Kolkata, Mohanpur-741 252, India }
\email{\textcolor[rgb]{0.00,0.00,0.84}{ mithun.mukherjee@iiserkol.ac.in }}


\subjclass[2010]{Primary 46L55; Secondary 46C05.}

\keywords{Product Systems, Completely Positive Semigroups, $E_0$-Semigroups, Amalgamated Product.}

\begin{abstract}
It is known that the spatial product of two product systems is intrinsic. Here we extend this result by analysing subsystems of the tensor product of product systems. A relation with cluster systems in the sense of \cite{BLM-root} is established. In a special case, we show that the amalgamated product of product systems through strictly contractive units is independent of the choices of the units. The amalgamated product in this case is isomorphic to the tensor product of the spatial product of the two and the type I product system of index one.
\end{abstract} \maketitle

\section{Introduction and preliminaries}

By a product system, we mean a measurable family of Hilbert spaces $(\mathcal E_t)_{t>0}$ with associative identification $\mathcal E_s\otimes \mathcal E_t=\mathcal E_{s+t}.$ Arveson \cite{Arv-continuous} associated with every $E_0$-semigroup, a product system of Hilbert spaces. He showed that $E_0$-semigroups are classified by their product systems up to cocycle conjugacy. Product systems are classified as spatial and non-spatial depending on whether or not there is a unit in the product system, where a unit is a measurable family of sections $(u_s)_{s>0},$ such that $u_s \in \mathcal E_s, s>0$
and $u_{s+t}=u_s\otimes u_t, s,t >0$ under the identification. The spatial product system has an index and the index is additive with respect to the tensor product of product systems. Much of the theory has a counterpart in the theory of product system of Hilbert modules (\cite{MuS-Markov}, \cite{BhS-tensor}). Though there is no natural tensor product in the category of product systems of Hilbert modules. To overcome this, Skeide (\cite{Ske-index}) introduced the notion of spatial product in the category of spatial product systems of Hilbert modules for which the index is additive with respect to the spatial product.

For two product systems $\mathcal E = (\mathcal E_t)_{t>0}$ and $\mathcal F = (\mathcal F_t)_{t>0}$ with reference units $u=(u_t)_{t>0}$ and $v=(v_t)_{t>0}$ respectively, their spatial product can be identified with the subsystem of the tensor product generated
by subsystems $(u_t\otimes \mathcal F_t)_{t>0}$ and $(\mathcal E_t \otimes v_t)_{t>0}$ . This raises another question, namely, whether the spatial product
is the tensor product or not. This has been answered in the negative sense by Powers \cite{Pow-addition}. This is exactly the same description of the product systems arising from the Powers sum of two $E_0$-semigroups. See \cite{Ske-commutant}, \cite{BLS-Powers}.

The spatial structure of a spatial product system depends on the reference unit. Indeed, Tsirelson (\cite{Tsi-automorphism}) showed that not all spatial product systems are transitive. i.e. there are spatial product systems in with two normalized units and without any automorphism of the product system sending one unit to another. This immediately raises the question whether different choice of reference units yields isomorphic product systems or not.   In \cite{BLMS-intrinsic}, it was shown in the affirmative sense. See also \cite{BLS-Powers}, \cite{Lie-tensor}. In other words, the spatial product of two spatial product systems is independent of the choice of the reference units.

In \cite{BhM-inclusion}, the amalgamated product of two product systems through contractive morphism is introduced. In a special case, when the contractive morphism is implemented by normalized units in respective spatial product systems, it is nothing but the spatial product of product systems of Hilbert spaces. The notion of amalgamation was motivated by the observation that the entire operation of obtaining a Powers sum can be obtained by a more general 'corner', in particular contractive (not necessarily isometric) intertwining semigroups.

 In this paper, we show the following : given two product systems $\mathcal E$ and $\mathcal F$ and their subsystems $\mathcal M$ and $\mathcal N$ respectively, the subsystem generated by  $\mathcal E\otimes \mathcal N$ and $\mathcal M\otimes \mathcal F$ is same as the subsystem generated by  $\mathcal E\otimes \check{\mathcal N}$ and $\check{\mathcal M} \otimes \mathcal F$  into $\mathcal E \otimes \mathcal F.$ Here $\check{\mathcal M}$ and $\check{\mathcal N}$ are respectively the cluster systems of $\mathcal M$ and $\mathcal N$ in the sense of \cite{BLM-root}. As a special case, we have the result of \cite{BLMS-intrinsic} namely spatial products of product systems are intrinsic. We also show that the amalgamated product of product systems through strictly contractive units is independent of the choices of the units and moreover the amalgamated product in this case is isomorphic to the tensor product of the spatial product of the two and the type I product system of index one.

\begin{remark}
It should be noted that some of these results also follow from the theory of random sets. See Proposition 5.3, \cite{Lie-tensor} for more details. See also Proposition 3.33, \cite{Lie-random} and the identification with the cluster construction given in Theorem 2.7, \cite{BLM-root}. But here we give a plain Hilbert space proof of this result.
\end{remark}

\section{Product systems and amalgamated product}

Let us start with some definitions.

\begin{definition}
A continuous tensor product system of Hilbert spaces (briefly: product system) is a measurable family $\mathcal E=(\mathcal E_t)_{t>0}$ of separable Hilbert spaces endowed with a measurable family of unitaries $V_{s,t}: \mathcal E_s\otimes \mathcal E_t\rightarrow \mathcal E_{s+t}$ for all $s,t >0,$ which fulfils for all $r,s,t > 0$  $$ V_{r,s+t}\circ (1_{\mathcal E_r} \otimes V_{s,t}) = V_{r+s,t} \circ (V_{r,s} \otimes 1_{\mathcal E_t} ). $$
\end{definition}

\begin{definition}
A unit $u$ of a product system is a measurable non-zero section $(u_t)_{t>0}$ through $(\mathcal E_t)_{t>0}$ which satisfies for all $s,t >0$ $$ V_{s,t}(u_s\otimes u_t)= u_{s+t}.$$ A unit is said to be normalized if $\|u_t\|=1$ for all $t>0.$
\end{definition}

\begin{definition}
Suppose $\mathcal E$ and $\mathcal F$ are product systems with associated unitaries $(V_{s,t})_{s,t>0}$ and $(W_{s,t})_{s,t>0}$ respectively. We say that $C=(C_t)_{t>0}$ is a contractive morphism from $\mathcal F$ to $\mathcal E$ if $(C_t)_{t>0}$ is a measurable family of contractions $C_t:\mathcal F_t \rightarrow \mathcal E_t$ with $$ C_{s+t}\circ W_{s,t} = V_{s,t}\circ (C_s\otimes C_t), ~ (s,t>0). $$ A contractive morphism is said to be an isomorphism if $C_t$ is a unitary for all $t>0.$
\end{definition}

\begin{definition}
A product system $\mathcal G$ with associated unitaries $U_{s,t}$ is said to be a product subsystem of $\mathcal E$ if $\mathcal F_t \subset \mathcal E_t$ for all $t>0$ and $ U_{s,t} = V_{s,t}|_{G_s \otimes G_t}$ for all $s,t>0.$
\end{definition}

\begin{remark}
We do not make the definition of measurability more explicit throughout this paper. For a thorough discussion, see Section 7, \cite{Lie-random}. In this context, we call by an \textbf{algebraic} product system is an object exactly like a product system but without having any measurable structure.
\end{remark}

Suppose $\mathcal E$ and $\mathcal F$ are two product
systems and $C=(C_t)_{t>0}:\mathcal F\rightarrow \mathcal E$ is a contractive morphism. Their amalgamated product $\mathcal E\otimes_C\mathcal F$ is introduced in \cite{BhM-inclusion} and can be seen equivalent to the following description (Theorem 2.7, \cite{Muk-index}) :  $\mathcal E\otimes_C\mathcal F$ is the unique algebraic product system with isometric morphisms of product systems $I:\mathcal E\rightarrow \mathcal E\otimes_C\mathcal F$ and $J:\mathcal F\rightarrow\mathcal E\otimes_C\mathcal F$ such that $$\mathcal E\otimes_C\mathcal F\simeq I(\mathcal E)\bigvee J(\mathcal F)$$ and for $x\in \mathcal E_t$ and $y\in \mathcal F_t,$ $$\langle I_t(x),J_{t}(y)\rangle = \langle x,C_ty\rangle,$$ where for product subsystems $\mathcal F$ and $\mathcal F^\prime$ of a product system $\mathcal E,$ let us denote by $\mathcal F \bigvee \mathcal F^\prime,$ the product system generated by $\mathcal F$ and $\mathcal F^\prime.$

Suppose $\mathcal E$ and $\mathcal F$ are two spatial product systems with normalized units $u$ and $v$ respectively. Define the contractive morphism $C=(C_t)_{t>0}:\mathcal F\rightarrow \mathcal E$ by $$C_t=e^{-\lambda t}|u_t\rangle\langle v_t|, ~\lambda \geq 0.$$ Let us denote by $\mathcal E\otimes_{(u,v,\lambda)}\mathcal F,$  the corresponding amalgamated product. For $\lambda=0,$ we simply denote it by $\mathcal E\otimes_{u,v}\mathcal F.$ We observe that $\mathcal E\otimes_{u,v}\mathcal F$ is the spatial product of $\mathcal E$ and $\mathcal F$ with respect to the reference units $u$ and $v$ respectively. Let $\Gamma$ be the type I product system of index $1.$ Choose and fix normalized units $\Omega$ and $\Omega^\prime$ of $\Gamma$ such that $\langle \Omega_t,\Omega^\prime_t\rangle=e^{-\lambda t}.$ These can be chosen easily as $\Gamma$ is isomorphic to the Fock product system $(\Gamma_{sym}(L^2[0,t]))_{t>0}$ and for the later, choose $\Omega$ to be the vacuum unit and $\Omega^\prime_t=e^{-\lambda t}\exp(\sqrt{2\lambda}\chi_{[0,t]}),$ for $t>0.$  Note that $\lambda=0$ if and only if $\Omega=\Omega^\prime.$ For any spatial product system $\mathcal E,$ we denote by $\mathcal E^I,$ the type I part of the product system, i.e. the smallest product subsystem of $\mathcal E$ containing all the units of $\mathcal E.$

\begin{proposition}\label{tensor}
Suppose $\mathcal E$ and $\mathcal F$ are two spatial product
systems. Let $u$ and $v$ be two normalized units of $\mathcal E$ and
$\mathcal F$ respectively. Then $\mathcal E\otimes_{(u,v,\lambda)}\mathcal F$
is isomorphic to the product system generated by $\mathcal E\otimes\Omega\otimes
v$ and $u\otimes\Omega^\prime\otimes\mathcal F$ inside $\mathcal E\otimes\Gamma\otimes\mathcal F,$
i.e. \begin{align*} \mathcal E\otimes_{(u,v,\lambda)}\mathcal F\simeq (\mathcal
E\otimes\Omega\otimes v)\bigvee (u\otimes\Omega^\prime\otimes\mathcal F).\end{align*}
\end{proposition}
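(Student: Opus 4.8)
The plan is to verify directly that the right-hand side, together with the obvious embeddings, satisfies the universal property characterising the amalgamated product that was recalled just above (Theorem 2.7 of \cite{Muk-index}). Write $\mathcal P$ for the product subsystem $(\mathcal E\otimes\Omega\otimes v)\bigvee(u\otimes\Omega^\prime\otimes\mathcal F)$ of $\mathcal E\otimes\Gamma\otimes\mathcal F$; it is a genuine (measurable) product system, being generated by two product subsystems. First I would observe that, since $\Omega$, $\Omega^\prime$, $u$ and $v$ are units, the families $t\mapsto \mathcal E_t\otimes\Omega_t\otimes v_t$ and $t\mapsto u_t\otimes\Omega^\prime_t\otimes\mathcal F_t$ are indeed product subsystems of $\mathcal E\otimes\Gamma\otimes\mathcal F$, and that the maps
\[ I_t:\mathcal E_t\to \mathcal E_t\otimes\Gamma_t\otimes\mathcal F_t,\quad x\mapsto x\otimes\Omega_t\otimes v_t, \qquad J_t:\mathcal F_t\to \mathcal E_t\otimes\Gamma_t\otimes\mathcal F_t,\quad y\mapsto u_t\otimes\Omega^\prime_t\otimes y, \]
are isometries, because $\|\Omega_t\|=\|\Omega^\prime_t\|=\|u_t\|=\|v_t\|=1$.

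The next step is to check that $I=(I_t)_{t>0}$ and $J=(J_t)_{t>0}$ are morphisms of product systems. This reduces to a leg-permutation identity on $\mathcal E\otimes\Gamma\otimes\mathcal F$: under the canonical identification $(\mathcal E\otimes\Gamma\otimes\mathcal F)_{s+t}=(\mathcal E\otimes\Gamma\otimes\mathcal F)_s\otimes(\mathcal E\otimes\Gamma\otimes\mathcal F)_t$, using $\Omega_{s+t}=\Omega_s\otimes\Omega_t$ and $v_{s+t}=v_s\otimes v_t$, the vector $(x\otimes x^\prime)\otimes\Omega_{s+t}\otimes v_{s+t}$ corresponds to $(x\otimes\Omega_s\otimes v_s)\otimes(x^\prime\otimes\Omega_t\otimes v_t)$, i.e.\ $I_{s+t}=I_s\otimes I_t$; the same reasoning gives $J_{s+t}=J_s\otimes J_t$. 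By construction $I(\mathcal E)=\mathcal E\otimes\Omega\otimes v$ and $J(\mathcal F)=u\otimes\Omega^\prime\otimes\mathcal F$ as product subsystems, so $I(\mathcal E)\bigvee J(\mathcal F)=\mathcal P$.

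Then I would compute the cross inner product: for $x\in\mathcal E_t$ and $y\in\mathcal F_t$,
\[ \langle I_t(x),J_t(y)\rangle=\langle x\otimes\Omega_t\otimes v_t,\;u_t\otimes\Omega^\prime_t\otimes y\rangle=\langle x,u_t\rangle\,\langle\Omega_t,\Omega^\prime_t\rangle\,\langle v_t,y\rangle=e^{-\lambda t}\langle x,u_t\rangle\langle v_t,y\rangle, \]
which is exactly $\langle x,C_ty\rangle$ with $C_t=e^{-\lambda t}|u_t\rangle\langle v_t|$. Hence $\mathcal P$, equipped with the isometric morphisms $I$ and $J$, meets all the defining conditions of $\mathcal E\otimes_C\mathcal F=\mathcal E\otimes_{(u,v,\lambda)}\mathcal F$, and the uniqueness part of the characterisation yields the claimed isomorphism.

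As for difficulty, essentially everything here is a matter of carefully unwinding definitions, so the argument is short. The only points that need a little care are: the bookkeeping of tensor legs when verifying that $I$ and $J$ are morphisms (using the flip identifying $(\mathcal E\otimes\Gamma\otimes\mathcal F)_{s+t}$ with the tensor product of the time-$s$ and time-$t$ fibres consistently); the existence of the units $\Omega,\Omega^\prime$ with $\langle\Omega_t,\Omega^\prime_t\rangle=e^{-\lambda t}$, which is already arranged via the Fock picture of $\Gamma$ in the paragraph preceding the statement; and the remark that the isomorphism supplied by the (a priori algebraic) characterisation is automatically measurable here, since $I$, $J$ and the ambient product system $\mathcal E\otimes\Gamma\otimes\mathcal F$ all are.
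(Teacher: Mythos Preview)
Your proof is correct and follows essentially the same route as the paper's own argument: define the isometric embeddings $I_t(x)=x\otimes\Omega_t\otimes v_t$ and $J_t(y)=u_t\otimes\Omega^\prime_t\otimes y$, verify the cross inner product equals $\langle x,C_t y\rangle$ with $C_t=e^{-\lambda t}|u_t\rangle\langle v_t|$, and invoke the uniqueness clause of Theorem~2.7 of \cite{Muk-index}. Your write-up is in fact more explicit than the paper's on why $I$ and $J$ are morphisms and on the measurability point, but the strategy is identical.
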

\begin{proof}

 Define for each $t>0,$ $I_t:\mathcal E_t\rightarrow \mathcal E_t\otimes \Gamma_t\otimes \mathcal F_t$ and $J_t:\mathcal F_t \rightarrow \mathcal E_t \otimes \Gamma_t\otimes \mathcal F_t$ by, for $x\in \mathcal E_t,$ $I_t(x)=x\otimes \Omega_t\otimes v_t,$ for $y\in \mathcal F_t,$ $J_t(y)=u_t\otimes \Omega^\prime_t\otimes y.$ Then it is easy to see that $I=(I_t)_{t>0}$ and $J=(J_t)_{t>0}$ are isometric morphisms of product systems satisfying $\langle I_t(x),J_t(y)\rangle = \langle x, e^{-\lambda t}|u_t\rangle \langle v_t| y \rangle.$ Consequently from Theorem 2.7, \cite{Muk-index}, we get  $\mathcal E\otimes_{(u,v,\lambda)}\mathcal F \simeq I(\mathcal E)\bigvee J(\mathcal F),$ as algebraic product systems.  Now transferring the measurable structure of $ (\mathcal E\otimes \Omega \otimes v) \bigvee (u\otimes \Omega^\prime \otimes \mathcal F)$ onto $\mathcal E\otimes_{(u,v, \lambda)}\mathcal F $ via the isomorphism, we can make $\mathcal E\otimes_{(u,v,\lambda)}\mathcal F$ into a product system and the isomorphism becomes the isomorphism of product systems.

 \end{proof}

\section{Roots and Cluster Systems}

We denote the multiplication operation of the product system by $\circ$ i.e. $a\in \mathcal E_s,$ $b\in\mathcal E_t,$ we have $a\circ b\in \mathcal E_{s+t}.$  This notation is to differentiate the multiplication operation of the product system from the tensor product operation on the category of product systems. The following definition is adopted from \cite{BLM-root}.

\begin{definition} Let $\mathcal E$ be a spatial product system and let $u$ be a unit of this product system. A
measurable section $(a_t)_{t>0}$ of $\mathcal E$ is said to be a root of $u$ if
$$a_{s+t}=a_s\circ u_t+u_s\circ a_t~,~\langle a_t,u_t\rangle=0~,~\forall s,t>0.$$
\end{definition}
Note that for $t_1,t_2,\cdots,t_n>0$ with $\sum_{i=1}^nt_i=t,$ the following identity holds : $a_t=\sum_{i=1}^n y^i,$ where $y^i=u_{t_1}\circ u_{t_2}\circ\cdots\circ u_{t_{i-1}}\circ a_{t_i}\circ u_{t_{i+1}}\circ\cdots\circ u_{t_n}.$ Also note that $y^i$ and $y^j$ are orthogonal for $i\neq j.$ Hence $\|a_t\|^2=\sum_{i=1}^n\|y^i\|^2.$ Considering the symmetric Fock product system $\Gamma_{sym}(L^2[0,t],K),$ it is shown in Proposition 12, \cite{BLM-root} that the roots of the vacuum unit are given by $c\chi_{[0,t]},$ $c\in K.$ Note that the vacuum and $c\chi_{[0,t]},$ $c\in K$ generates the Fock product system and as a consequence we have the following result.

\begin{proposition}[Corollary 15, \cite{BLM-root}]\label{unit-roots}
Suppose $\mathcal E$ is a spatial product system and $u$ is a
unit. The product system generated by the unit $u$ and all roots of $u$ is $\mathcal E^I.$
\end{proposition}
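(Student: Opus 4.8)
The plan is to reduce the statement to the known structure of roots in the Fock product system and then invoke Proposition \ref{unit-roots}-style reasoning in reverse. Let $\mathcal E^{(u)}$ denote the product subsystem generated by $u$ together with all roots of $u$; by definition $\mathcal E^{(u)}\subseteq\mathcal E^I$, and we must establish the reverse inclusion. The first step is to show that $\mathcal E^{(u)}$ is itself a spatial product system containing $u$, so that the type I part $(\mathcal E^{(u)})^I$ makes sense, and that $(\mathcal E^{(u)})^I=\mathcal E^{(u)}$; this follows once we know $\mathcal E^{(u)}$ contains every unit of $\mathcal E$, since $\mathcal E^I$ is by definition the smallest subsystem containing all units.

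So the crux is: given any unit $w$ of $\mathcal E$, show that $w_t\in\mathcal E^{(u)}_t$ for all $t$. First I would recall (or cite from \cite{BLM-root}) that within $\mathcal E^I$, the pair $u$ and the collection of all roots of $u$ generate a Fock product system $\Gamma_{sym}(L^2[0,\cdot],K)$, with $u$ corresponding to the vacuum and the roots corresponding to the constant sections $c\chi_{[0,t]}$, $c\in K$; this is exactly the content cited just before the statement (Proposition 12 of \cite{BLM-root} together with the remark that vacuum and these roots generate the Fock system). Thus $\mathcal E^{(u)}$ is isomorphic to a Fock product system. Now I would use the classical fact that in $\Gamma_{sym}(L^2[0,t],K)$ every unit is (up to a scalar exponential factor) of the form $\exp(c\chi_{[0,t]})$ for some $c\in K$ — equivalently, the exponential vectors of the constant functions — and these are manifestly limits of polynomial (hence finite-particle) expressions in the vacuum and the roots $c\chi_{[0,t]}$. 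Hence every unit of the Fock system lies in $\mathcal E^{(u)}$.

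The remaining point, which is really the main obstacle, is the converse matching: I must check that \emph{every} unit $w$ of the ambient $\mathcal E$ actually lies inside this Fock subsystem $\mathcal E^{(u)}$, not merely that the units of the abstract Fock system do. The standard argument here is the Cameron–Martin / logarithm computation: for any unit $w$, the section $t\mapsto$ (suitably normalized $w_t$) minus its $u$-component, after passing through the addition formula for units, produces an additive section, i.e. a root of $u$; more precisely one shows $\frac{d}{d\varepsilon}\big|_{\varepsilon=0}$ of the normalized unit $w^{(\varepsilon)}$ interpolating between $u$ and $w$ is a root of $u$, and then $w$ is recovered from $u$ and this root by exponentiation inside the Fock system. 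Carrying this out rigorously requires the measurable/continuity structure and a small amount of one-parameter semigroup analysis, but it is routine given the Fock picture. Once every unit of $\mathcal E$ is shown to lie in $\mathcal E^{(u)}$, we get $\mathcal E^I\subseteq\mathcal E^{(u)}$, and combined with the trivial inclusion $\mathcal E^{(u)}\subseteq\mathcal E^I$ we conclude $\mathcal E^{(u)}=\mathcal E^I$, as claimed.
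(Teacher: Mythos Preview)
The paper does not give an independent proof here; the proposition is quoted as Corollary~15 of \cite{BLM-root}, and the sentence immediately preceding it is the entire sketch offered: $\mathcal E^I$ is type~I, hence isomorphic to a symmetric Fock system $\Gamma_{sym}(L^2[0,\cdot],K)$; under that identification $u$ becomes the vacuum, Proposition~12 of \cite{BLM-root} says the roots of the vacuum are exactly the one-particle vectors $c\chi_{[0,t]}$, and these together with the vacuum generate the whole Fock system.

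Your proposal has two genuine problems. First, the inclusion $\mathcal E^{(u)}\subseteq\mathcal E^I$ is not ``by definition'': $\mathcal E^I$ is the subsystem generated by all \emph{units}, while $\mathcal E^{(u)}$ is generated by $u$ together with its \emph{roots}, so this inclusion amounts precisely to the assertion that every root of $u$ in the ambient system $\mathcal E$ already lies in $\mathcal E^I$. That is not automatic and is in fact half of the content of the result; you cannot simply declare it. Second, your argument for the reverse inclusion is partly circular. You write that ``within $\mathcal E^I$, the pair $u$ and the collection of all roots of $u$ generate a Fock product system \ldots\ thus $\mathcal E^{(u)}$ is isomorphic to a Fock product system,'' invoking Proposition~12 of \cite{BLM-root}. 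But that proposition is a statement about the Fock system; to transport it to $\mathcal E^{(u)}$ you are tacitly using $\mathcal E^{(u)}=\mathcal E^I$, which is the very thing to be proved. Once one grants the Fock identification of $\mathcal E^I$ and the characterisation of the roots there, the proof is finished in one line (exactly the paper's sketch), and your subsequent derivative/exponential manoeuvre is then superfluous.

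That said, the derivative/exponential idea you describe at the end --- from an arbitrary unit $w$ produce a root of $u$ by differentiating a connecting family, then recover $w$ from $u$ and that root --- is a perfectly valid alternative route to $\mathcal E^I\subseteq\mathcal E^{(u)}$ that avoids invoking the global Fock isomorphism up front. If you want to salvage the argument, separate it cleanly from the Fock picture rather than mixing the two, and supply an honest proof (or citation) that roots of $u$ lie in $\mathcal E^I$ for the other inclusion.
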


Now we recall the notion of cluster system of a product system introduced in \cite{BLM-root}. Suppose $(\mathcal E,B)$ is a product system and $(\mathcal F,B|_F)$ is a product subsystem. Define $\tilde{\mathcal F_t}$ by $$\tilde{\mathcal F_t}=\overline{\mbox{span}}\{x\circ y: x \in \mathcal E_r\ominus \mathcal F_r, y \in \mathcal E_{t-r}\ominus \mathcal F_{t-r}, ~\mbox{for}~ \mbox{some}~ r,
0<r<t\}. $$ Set $\mathcal F^\prime_t=\mathcal E_t\ominus \tilde{\mathcal F_t}.$ Then $\mathcal F^\prime_{s+t}\subset \mathcal F^\prime_s \otimes \mathcal F^\prime_t$ for all $s,t > 0$ (\cite{BLM-root}). Denote by $\check{\mathcal F},$ the product subsystem generated by $\mathcal F^\prime.$ We call $\check{\mathcal F}$ the cluster of $\mathcal F.$ See \cite{BLM-root}. The name `cluster' comes from its connection to random sets (\cite{Lie-random}) which we now describe briefly. Suppose $\mathcal G$ is a product subsystem of a product system $\mathcal E.$ Then for every interval $[s,t],$ $0<s<t<1,$ we may identify, $\mathcal E_1\simeq \mathcal E_{s}\circ\mathcal E_{t-s}\circ\mathcal E_{1-t}.$   Let $P^\mathcal G_{s,t},$ $0\leq s\leq t\leq 1,$ be the family of commuting projections in $B(\mathcal E_1)$ defined by $$P^\mathcal G_{s,t}= P_{\mathcal E_{s}\circ\mathcal G_{t-s}\circ\mathcal E_{1-t}} = 1_{\mathcal E_s}\circ P_{\mathcal G_{t-s}}\circ
1_{\mathcal E_{1-t}},$$ where $P_K$ denotes the projection onto the subspace $K.$ Note that this operation is the usual tensor product of operators if the multiplication of the product system is viewed as the tensor product. From Proposition 3.18, \cite{Lie-random}, we know that $(s,t)
\rightarrow P^\mathcal G_{s,t}$ is jointly SOT continuous and the following holds : for every $x \in \mathcal E_1,$ $\|P^\mathcal G_{s,s+\epsilon}x-x\| \rightarrow 0$ and $\|P^\mathcal G_{t-\epsilon,t}x-x\|\rightarrow 0$ as $\epsilon \downarrow 0.$  So in the compact
simplex $\{0\leq s\leq t \leq 1\},$ it is uniformly continuous. i.e. for every $x \in \mathcal E_1,$
$\|P^\mathcal G_{s,t}x-x\|\rightarrow 0$ as $(t-s) \rightarrow 0.$
For $ n\geq 1,$ we have $$P^\mathcal G_{\frac{(i-1)}{n},\frac{i}{n}}=1_{\mathcal E_{\frac {1}{n}}}\circ
\cdots \circ 1_{\mathcal E_{\frac {1}{n}}}\circ P_{\mathcal G_{\frac {1}{n}}} \circ 1_{\mathcal E_{\frac {1}{n}}} \circ
\cdots \circ 1_{\mathcal E_{\frac {1}{n}}} ,$$ where $P_{\mathcal G_{\frac {1}{n}}}$ is on the i-th place.

Theorem 3.16, \cite{Lie-random} shows that any product subsystem $\mathcal G$ corresponds to a unique measure type $[\mu_\eta]$ ($\eta$ is a faithful state on $B(\mathcal E_1)$) on the closed subsets of $[0,1]$ such that the prescription $$ \chi_{\{Z:Z\cap [s,t]=\emptyset\}}\rightarrow
P^{\mathcal G}_{s,t},{~}((s,t)\in [0,1]) $$ extends to an injective
normal representation $J^{\mathcal G}_{\eta}$ of
$L^\infty(\mu^{\mathcal G}_\eta)$ on $\mathcal E_1.$   The mapping `cluster' which sends a closed set to its limit points is a measurable map on this space. More precisely, for any $Z \subset [0,1],$ denote $\check{Z}$ the set of its cluster points: $$\check{Z}=\{t\in Z:t \in \overline{ Z\setminus\{t\}}\}.$$  Then from Theorem 27, \cite{BLM-root}, we have \begin{align*} \label{check} J^{\mathcal G}_\eta(\chi_{\{Z:\check{Z}\cap [s,t]=\emptyset\}})=P^{\check{\mathcal G}}_{s,t},{~}((s,t)\in [0,1]).\end{align*}

\section{Subsystems of tensor product and their relation to cluster systems}

Our aim is to prove the following theorem.

\begin{theorem}\label{Main Theorem}
Suppose $\mathcal E$ and $\mathcal F$ are two product systems and also suppose $\mathcal M$ and $\mathcal N$ are product subsystems of $\mathcal E$ and $\mathcal F$ respectively. Then inside $\mathcal E\otimes \mathcal F,$  $$ \mathcal E \otimes \mathcal N \bigvee \mathcal M\otimes \mathcal F = \mathcal E \otimes \check{\mathcal N} \bigvee \check{\mathcal M} \otimes \mathcal F.$$
\end{theorem}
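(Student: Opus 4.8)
The plan is to prove the two inclusions separately, and for each inclusion it suffices to work with the generating subspaces at the level of the fibres $\mathcal E_t \otimes \mathcal F_t$. Since $\check{\mathcal M} \supseteq \mathcal M$ and $\check{\mathcal N} \supseteq \mathcal N$ (the cluster is generated by $\mathcal F'_t = \mathcal E_t \ominus \tilde{\mathcal F_t}$, and one checks $\mathcal F_t \subseteq \mathcal F'_t$ since $\tilde{\mathcal F_t} \perp \mathcal F_t$), the inclusion $\mathcal E\otimes\mathcal N \vee \mathcal M\otimes\mathcal F \subseteq \mathcal E\otimes\check{\mathcal N}\vee\check{\mathcal M}\otimes\mathcal F$ is immediate. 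The real content is the reverse inclusion: one must show that $\mathcal E_t \otimes \check{\mathcal N}_t$ and $\check{\mathcal M}_t \otimes \mathcal F_t$ already lie in the product subsystem $\mathcal K$ of $\mathcal E\otimes\mathcal F$ generated by $\mathcal E\otimes\mathcal N$ and $\mathcal M\otimes\mathcal F$.

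First I would record the basic structural fact about the cluster, following \cite{BLM-root}: writing $\mathcal E_t = \mathcal M_t \oplus \mathcal M_t^\perp$ (with $\mathcal M_t^\perp := \mathcal E_t \ominus \mathcal M_t$), one has the orthogonal decomposition $\mathcal E_t = \mathcal M_t' \oplus \tilde{\mathcal M_t}$, and $\tilde{\mathcal M_t}$ is (the closed span of) $\bigcup_{0<r<t} \mathcal M_r^\perp \circ \mathcal M_{t-r}^\perp$. The point of the cluster is that the \emph{non-cluster} part $\tilde{\mathcal M_t}$ is generated, under the product, by pairs of vectors each lying in a complement $\mathcal M^\perp$; dually, by dualizing the projections as in the random-set picture, a vector in $\mathcal M'_t$ is one that cannot be ``split off'' into $\mathcal M^\perp$ on any subinterval. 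The key lemma I would aim to prove is: for every $\varepsilon>0$ and every $n$, with the partition of $[0,t]$ into $n$ equal pieces, the projection $P_{\mathcal M'_t}$ is dominated (in the limit $n\to\infty$, after a telescoping/inclusion–exclusion argument on the commuting family of projections $P^{\mathcal M}_{(i-1)t/n,\,it/n}$) by the projection onto the span of products $x_1 \circ \cdots \circ x_n$ in which \emph{at most one} factor lies in $\mathcal M^\perp_{t/n}$ — and in fact $\mathcal M'_t \subseteq \check{\mathcal M}_t \subseteq$ the subsystem generated by $\mathcal M$ together with the ``roots-like'' vectors obtained as limits of such single-defect products. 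Combined with Proposition~\ref{unit-roots} one sees $\check{\mathcal M}$ is generated by $\mathcal M$ and these limiting single-defect vectors.

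The heart of the argument is then: a single-defect product $u_{t/n}^{\mathcal M}\circ\cdots\circ x \circ\cdots\circ u_{t/n}^{\mathcal M}$ (one factor $x\in\mathcal M^\perp_{t/n}$, the rest in $\mathcal M_{t/n}$), when tensored with an arbitrary vector $\xi\in\mathcal F_t$ and viewed inside $\mathcal E_t\otimes\mathcal F_t$, can be rewritten — using the identification $\mathcal E_t\otimes\mathcal F_t \simeq \bigotimes_{i=1}^n (\mathcal E_{t/n}\otimes\mathcal F_{t/n})$ and the product-system multiplication in both factors — as a sum/limit of vectors each of which, in every tensor slot, either has its $\mathcal E$-component in $\mathcal M_{t/n}$ (so the slot lies in $\mathcal M_{t/n}\otimes\mathcal F_{t/n} \subseteq (\mathcal M\otimes\mathcal F)_{t/n}$) or, in the one exceptional slot, lies in $\mathcal E_{t/n}\otimes\mathcal F_{t/n} = (\mathcal E\otimes\mathcal N)_{t/n}\vee(\mathcal M\otimes\mathcal F)_{t/n}$ trivially when $\mathcal N_{t/n}$ is large, but more carefully one pushes the defect into the $\mathcal N$-direction. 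Concretely: $x\otimes\eta \in \mathcal E_{t/n}\otimes\mathcal F_{t/n}$ decomposes along $\mathcal F_{t/n}=\mathcal N_{t/n}\oplus\mathcal N_{t/n}^\perp$, the $\mathcal N$-part is in $\mathcal E\otimes\mathcal N$, and the $\mathcal N^\perp$-part $x\otimes\eta'$ must be absorbed — here one uses that the \emph{other} $n-1$ slots carry $u^{\mathcal M}\otimes(\text{something})$, and the ``something'' can in turn be split; iterating, any vector with $\mathcal M^\perp$-defects in the $\mathcal E$-leg and $\mathcal N^\perp$-defects in the $\mathcal F$-leg that are never \emph{simultaneously} present on overlapping subintervals (which is exactly the cluster/single-defect condition, since two defects on disjoint subintervals of the $\mathcal E$-leg would land in $\tilde{\mathcal M}$) lies in $\mathcal K$. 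The symmetric statement handles $\check{\mathcal M}\otimes\mathcal F$.

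The main obstacle, which I would expect to absorb most of the work, is making the ``limit of single-defect products'' argument precise: the telescoping identity $1 - P_{\mathcal M'_t} = \bigvee_i(\text{two-defect or worse})$ is clean combinatorially, but one must control the SOT limits as $n\to\infty$ uniformly — this is where the joint continuity of $(s,t)\mapsto P^{\mathcal M}_{s,t}$ from Proposition~3.18 of \cite{Lie-random} and the identification of $P^{\check{\mathcal M}}$ via the cluster map $Z\mapsto\check Z$ (Theorem~27 of \cite{BLM-root}) are essential. An alternative, cleaner route that I would seriously consider is to avoid the explicit combinatorics entirely: apply the functional calculus $J^{\mathcal M}_\eta$ and $J^{\mathcal N}_{\eta'}$ for suitable faithful states, observe that $\mathcal E\otimes\mathcal N\vee\mathcal M\otimes\mathcal F$ corresponds (on $\mathcal E_1\otimes\mathcal F_1$) to a projection expressible through the random sets $Z_{\mathcal M}, Z_{\mathcal N}$, and use the set-theoretic identity relating $\{Z_{\mathcal M}\cap[s,t]=\emptyset\}\cup\{Z_{\mathcal N}\cap[s,t]=\emptyset\}$ to the same event with $\check Z$ in place of $Z$ — the latter being an almost-sure identity because $Z\setminus\check Z$ is countable, hence Lebesgue-null, hence $\mu$-null for the relevant measure types. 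Since the paper advertises a ``plain Hilbert space proof,'' I would present the direct partition argument as the main line and only remark on the random-set shortcut.
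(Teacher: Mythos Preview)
Your overall plan is right --- the inclusion $\supseteq$ is immediate from $\mathcal M\subseteq\check{\mathcal M}$, and for the hard direction you correctly identify the single-defect idea and the uniform continuity of $P^{\mathcal M}_{s,t}$ as the key ingredients. But the execution has a genuine gap. The paper's crucial lemma (Lemma~\ref{X_s}) is an \emph{exact} identity valid for every $s,t$: with $\mathcal X_t := \mathcal M'_t \ominus \mathcal M_t$ one has $\mathcal X_{s+t} = \mathcal M_s\circ\mathcal X_t \,\oplus\, \mathcal X_s\circ\mathcal M_t$, hence for every fixed $n$ an orthogonal decomposition $\mathcal X_1 = \bigoplus_{i=1}^n \mathcal Z_i$ into single-defect pieces. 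You treat this instead as a limiting approximation (``in the limit $n\to\infty$ after telescoping/inclusion--exclusion''), which both overcomplicates matters and loses the orthogonality that is essential for the final norm estimate. Your appeal to Proposition~\ref{unit-roots} here is also misplaced --- that result concerns a unit and its roots, not a general subsystem $\mathcal M$.

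The more serious problem is in your ``heart of the argument''. You put the single defect on the $\mathcal E$-side, tensor with an arbitrary $\xi\in\mathcal F_t$, and at the defect slot $i$ split $\xi_i$ along $\mathcal N\oplus\mathcal N^\perp$; but the leftover $x\otimes\xi_i^{\mathcal N^\perp}$ lives in $\mathcal M^\perp_{t/n}\otimes\mathcal N^\perp_{t/n}$, which lies in neither generating subsystem, and no amount of ``iterating'' on the other (already good) slots or further subdividing repairs this. The paper's move is different: to show $\mathcal E\otimes\check{\mathcal N}\subseteq\mathcal K$, take $z\in\mathcal E_1$ and $\eta\in\mathcal Y_1:=\mathcal N'_1\ominus\mathcal N_1$, apply Lemma~\ref{X_s} to get the exact orthogonal decomposition $\eta=\bigoplus_i\eta_i$ on the $\mathcal F$-side, and then use uniform continuity on the \emph{opposite} side to replace $z$ by $P^{\mathcal M}_{(i-1)/n,\,i/n}z$. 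Now slot $i$ carries $\mathcal M_{1/n}\otimes\mathcal Y_{1/n}\subseteq(\mathcal M\otimes\mathcal F)_{1/n}$ and every other slot carries $\mathcal E_{1/n}\otimes\mathcal N_{1/n}\subseteq(\mathcal E\otimes\mathcal N)_{1/n}$, so $P^{\mathcal M}_i z\otimes\eta_i\in\mathcal K_1$ on the nose; orthogonality of the $\eta_i$ then gives $\|z\otimes\eta-\sum_i P^{\mathcal M}_i z\otimes\eta_i\|^2=\sum_i\|z-P^{\mathcal M}_i z\|^2\|\eta_i\|^2<\epsilon^2$. The interplay --- exact single-defect decomposition on one leg, approximate projection matching the defect slot on the other --- is precisely what your iteration scheme is missing. (Your random-set alternative at the end is a valid route, and the paper acknowledges as much, but explicitly opts for the direct argument.)
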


The  proof we postpone to the very end, after having illustrated the immediate consequences.

Let us define inductively $\mathcal M^{n+1}=\check{\mathcal M^n},$ where $\mathcal M^1=\check{\mathcal M}.$  Denote by $\mathcal M^\infty= \bigvee_n \mathcal M^n.$ Similarly for the subsystem $\mathcal N.$ Then we have the following corollary.

\begin{corollary}
If $\mathcal M^\infty = \mathcal E$ or $\mathcal N^\infty = \mathcal F,$ then $$ \mathcal E \otimes \mathcal N \bigvee \mathcal M\otimes \mathcal F = \mathcal E \otimes \mathcal F. $$
\end{corollary}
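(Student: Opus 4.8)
The plan is to iterate Theorem \ref{Main Theorem}. The key observation is that applying the theorem once allows one to replace $\mathcal N$ by $\check{\mathcal N}$ (and $\mathcal M$ by $\check{\mathcal M}$) inside the generated subsystem without changing it; the idea is to feed the output of one application back as the input of the next.

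First I would prove, by induction on $n \geq 1$, the identity
\begin{align*}
\mathcal E \otimes \mathcal N \bigvee \mathcal M \otimes \mathcal F = \mathcal E \otimes \mathcal N^n \bigvee \mathcal M^n \otimes \mathcal F.
\end{align*}
The base case $n=1$ is precisely Theorem \ref{Main Theorem}. For the inductive step, assume the identity holds for $n$. Since $\mathcal M^n$ is a product subsystem of $\mathcal E$ and $\mathcal N^n$ is a product subsystem of $\mathcal F$, Theorem \ref{Main Theorem} applies with $\mathcal M$ replaced by $\mathcal M^n$ and $\mathcal N$ replaced by $\mathcal N^n$, giving
\begin{align*}
\mathcal E \otimes \mathcal N^n \bigvee \mathcal M^n \otimes \mathcal F = \mathcal E \otimes \check{\mathcal N^n} \bigvee \check{\mathcal M^n} \otimes \mathcal F = \mathcal E \otimes \mathcal N^{n+1} \bigvee \mathcal M^{n+1} \otimes \mathcal F,
\end{align*}
using the definitions $\mathcal M^{n+1} = \check{\mathcal M^n}$, $\mathcal N^{n+1} = \check{\mathcal N^n}$. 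Combining with the inductive hypothesis completes the induction.

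Next I would pass to the union over $n$. Since $\mathcal M^n$ is an increasing chain of product subsystems (each $\mathcal M^n \subseteq \mathcal M^{n+1}$, which should follow from the cluster construction) with $\bigvee_n \mathcal M^n = \mathcal M^\infty$, and similarly for $\mathcal N$, taking the join over all $n$ of the identity above yields
\begin{align*}
\mathcal E \otimes \mathcal N \bigvee \mathcal M \otimes \mathcal F = \mathcal E \otimes \mathcal N^\infty \bigvee \mathcal M^\infty \otimes \mathcal F.
\end{align*}
Now if $\mathcal N^\infty = \mathcal F$, the right-hand side contains $\mathcal E \otimes \mathcal F$ already through the first factor, hence equals $\mathcal E \otimes \mathcal F$; symmetrically if $\mathcal M^\infty = \mathcal E$. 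This gives the claim.

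The main obstacle I anticipate is the interchange of the countable join $\bigvee_n$ with the fixed binary join in passing from the finite-$n$ statement to the $\infty$ statement: one must check that $\bigvee_n\big(\mathcal E \otimes \mathcal N^n \bigvee \mathcal M^n \otimes \mathcal F\big)$ genuinely equals $\mathcal E \otimes \mathcal N^\infty \bigvee \mathcal M^\infty \otimes \mathcal F$, which amounts to the fact that the product subsystem generated by an increasing union of subsystems is the join of the generated subsystems — a continuity/density statement that is routine but should be stated carefully, together with the monotonicity $\mathcal M^n \subseteq \mathcal M^{n+1}$ of the cluster iterates.
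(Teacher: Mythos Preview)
Your proposal is correct and is exactly the argument the paper has in mind; the paper states the corollary without proof, treating it as an immediate consequence of Theorem~\ref{Main Theorem}, and your iteration of that theorem together with the monotonicity $\mathcal M\subset\check{\mathcal M}$ (which follows since $\mathcal F_t\perp\tilde{\mathcal F}_t$, hence $\mathcal F_t\subset\mathcal F'_t\subset\check{\mathcal F}_t$) supplies precisely the missing details. The limiting step you flag is indeed routine: because the left-hand side is constant in $n$, each $\mathcal M^n\otimes\mathcal F$ sits inside it, and since the $\mathcal M^n$ increase, $(\mathcal M^\infty)_t=\overline{\bigcup_n\mathcal M^n_t}$, so $\mathcal M^\infty\otimes\mathcal F$ (and symmetrically $\mathcal E\otimes\mathcal N^\infty$) is contained in the left-hand side.
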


The case corresponding to $\lambda =0$ of the following corollary is the main result in \cite{BLMS-intrinsic}.

\begin{corollary}\label{independent}
Suppose $\mathcal E$ and $\mathcal F$ are two spatial product
systems with normalized units $u$ and $v$ respectively. Suppose $\lambda \geq 0.$ Then
$$ \mathcal E\otimes_{(u,v,\lambda)}\mathcal F \simeq  \left\{
 \begin{array}{cc}
 (\mathcal E\otimes
 \mathcal F^I) \bigvee (\mathcal E^I\otimes\mathcal F) & \mbox{if}~\lambda=0, \\
 (\mathcal E\otimes\Gamma\otimes
 \mathcal F^I) \bigvee (\mathcal E^I\otimes\Gamma\otimes\mathcal F) & \mbox{if}~\lambda > 0 \\
 \end{array}
 \right. $$
\end{corollary}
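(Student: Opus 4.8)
The plan is to reduce both cases, via Proposition~\ref{tensor} and repeated applications of Theorem~\ref{Main Theorem}, to the following lemma, which I would prove first: \emph{for every unit $w$ of a product system $\mathcal{G}$, the one-dimensional product subsystem $\mathbb{C}w$ generated by $w$ satisfies $\mathcal{G}^I\subseteq\check{\mathbb{C}w}$.} Recall that $\check{\mathbb{C}w}$ is generated by the spaces $(\mathbb{C}w)'_t=\mathcal{G}_t\ominus\widetilde{\mathbb{C}w}_t$. From $w_{s+t}=w_s\circ w_t$ one gets $\langle w_t,x\circ y\rangle=\langle w_r,x\rangle\langle w_{t-r},y\rangle=0$ whenever $x\in\mathcal{G}_r\ominus\mathbb{C}w_r$ and $y\in\mathcal{G}_{t-r}\ominus\mathbb{C}w_{t-r}$, so $w_t\perp\widetilde{\mathbb{C}w}_t$; and if $(a_t)$ is any root of $w$, then for each split point $r$ the root identity together with $\langle a_\bullet,w_\bullet\rangle=0$ gives $\langle a_t,x\circ y\rangle=\langle a_r,x\rangle\langle w_{t-r},y\rangle+\langle w_r,x\rangle\langle a_{t-r},y\rangle=0$, so $a_t\perp\widetilde{\mathbb{C}w}_t$. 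Hence $\check{\mathbb{C}w}$ contains $w$ together with all its roots, and $\mathcal{G}^I\subseteq\check{\mathbb{C}w}$ by Proposition~\ref{unit-roots}. I would also use the elementary monotonicity $\mathcal{A}\subseteq\mathcal{B}\Rightarrow\check{\mathcal{A}}\subseteq\check{\mathcal{B}}$ (immediate from $\widetilde{\mathcal{B}}_t\subseteq\widetilde{\mathcal{A}}_t$), the triviality $\Gamma^I=\Gamma$, and the standard facts that a tensor product of type I systems is type I and that $\mathcal{E}^I$ is the largest type I subsystem of $\mathcal{E}$, which together give $\mathcal{E}^I\otimes\Gamma\subseteq(\mathcal{E}\otimes\Gamma)^I$.

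For $\lambda=0$ we have $\Omega=\Omega'$, so Proposition~\ref{tensor} gives $\mathcal{E}\otimes_{u,v}\mathcal{F}\simeq(\mathcal{E}\otimes\Omega\otimes v)\bigvee(u\otimes\Omega\otimes\mathcal{F})$; since every product keeps the middle leg in $\mathbb{C}\Omega$, the family of unitaries $\xi\otimes\Omega_t\otimes\eta\mapsto\xi\otimes\eta$ identifies this generated subsystem with $(\mathcal{E}\otimes\mathbb{C}v)\bigvee(\mathbb{C}u\otimes\mathcal{F})$ inside $\mathcal{E}\otimes\mathcal{F}$. Theorem~\ref{Main Theorem} with $\mathcal{M}=\mathbb{C}u$, $\mathcal{N}=\mathbb{C}v$ rewrites the latter as $(\mathcal{E}\otimes\check{\mathbb{C}v})\bigvee(\check{\mathbb{C}u}\otimes\mathcal{F})$. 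Now $\mathbb{C}u\subseteq\mathcal{E}^I\subseteq\check{\mathbb{C}u}$ and $\mathbb{C}v\subseteq\mathcal{F}^I\subseteq\check{\mathbb{C}v}$ (the outer inclusions by the lemma, the inner ones because $u,v$ are units), so $(\mathcal{E}\otimes\mathcal{F}^I)\bigvee(\mathcal{E}^I\otimes\mathcal{F})$ is squeezed between the two equal joins $(\mathcal{E}\otimes\mathbb{C}v)\bigvee(\mathbb{C}u\otimes\mathcal{F})$ and $(\mathcal{E}\otimes\check{\mathbb{C}v})\bigvee(\check{\mathbb{C}u}\otimes\mathcal{F})$; hence all three coincide, proving the $\lambda=0$ case.

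For $\lambda>0$, write $\mathcal{S}=(\mathcal{E}\otimes\Omega\otimes v)\bigvee(u\otimes\Omega'\otimes\mathcal{F})$ inside $\mathcal{E}\otimes\Gamma\otimes\mathcal{F}$, so $\mathcal{S}\simeq\mathcal{E}\otimes_{(u,v,\lambda)}\mathcal{F}$ by Proposition~\ref{tensor}. The first step is $\mathcal{S}=(\mathcal{E}\otimes\Gamma\otimes v)\bigvee(u\otimes\Gamma\otimes\mathcal{F})$; one inclusion is clear. Since $\mathcal{S}$ contains $u\otimes\Omega\otimes v$ and $u\otimes\Omega'\otimes v$, it contains $u\otimes(\mathbb{C}\Omega\bigvee\mathbb{C}\Omega')\otimes v$; as $\Omega$ and $\Omega'$ are non-proportional units of the type I index-one system $\Gamma$, one has $\mathbb{C}\Omega\bigvee\mathbb{C}\Omega'=\Gamma$, so $u\otimes\Gamma\otimes v\subseteq\mathcal{S}$. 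Then $\mathcal{S}\supseteq\bigl((\mathcal{E}\otimes\mathbb{C}\Omega)\bigvee(\mathbb{C}u\otimes\Gamma)\bigr)\otimes v$, and Theorem~\ref{Main Theorem} inside $\mathcal{E}\otimes\Gamma$ (with $\mathcal{M}=\mathbb{C}u$, $\mathcal{N}=\mathbb{C}\Omega$) turns $(\mathcal{E}\otimes\mathbb{C}\Omega)\bigvee(\mathbb{C}u\otimes\Gamma)$ into $(\mathcal{E}\otimes\check{\mathbb{C}\Omega})\bigvee(\check{\mathbb{C}u}\otimes\Gamma)$, which equals $\mathcal{E}\otimes\Gamma$ because $\check{\mathbb{C}\Omega}\supseteq\Gamma^I=\Gamma$; thus $\mathcal{E}\otimes\Gamma\otimes v\subseteq\mathcal{S}$. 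A symmetric argument inside $\Gamma\otimes\mathcal{F}$, using $\check{\mathbb{C}\Omega'}=\Gamma$, gives $u\otimes\Gamma\otimes\mathcal{F}\subseteq\mathcal{S}$, completing the first step. Finally, Theorem~\ref{Main Theorem} applied to $(\mathcal{E}\otimes\Gamma)\otimes\mathcal{F}$ with $\mathcal{M}=\mathbb{C}u\otimes\Gamma$ and $\mathcal{N}=\mathbb{C}v$ rewrites $(\mathcal{E}\otimes\Gamma\otimes v)\bigvee(u\otimes\Gamma\otimes\mathcal{F})$ as $(\mathcal{E}\otimes\Gamma\otimes\check{\mathbb{C}v})\bigvee(\check{\mathcal{K}}\otimes\mathcal{F})$, where $\check{\mathcal{K}}$ is the cluster of $\mathcal{K}=\mathbb{C}u\otimes\Gamma$ in $\mathcal{E}\otimes\Gamma$; since $\check{\mathbb{C}v}\supseteq\mathcal{F}^I$ and, by monotonicity and the lemma applied to the unit $u\otimes\Omega'$ of $\mathcal{E}\otimes\Gamma$, $\check{\mathcal{K}}\supseteq\check{\mathbb{C}(u\otimes\Omega')}\supseteq(\mathcal{E}\otimes\Gamma)^I\supseteq\mathcal{E}^I\otimes\Gamma$, the same squeezing argument yields $\mathcal{S}=(\mathcal{E}\otimes\Gamma\otimes\mathcal{F}^I)\bigvee(\mathcal{E}^I\otimes\Gamma\otimes\mathcal{F})$.

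The one ingredient that lies genuinely outside this elementary toolkit (Theorem~\ref{Main Theorem}, the root computations, and cluster monotonicity) is the identity $\mathbb{C}\Omega\bigvee\mathbb{C}\Omega'=\Gamma$ used in the $\lambda>0$ case, and I expect it to be the main obstacle. It follows from the structure theory of type I product systems: every product subsystem of $\Gamma$ is type I of index at most one; a product system with two non-proportional units has index at least one; and a type I subsystem of $\Gamma$ of index one is all of $\Gamma$, since it then contains all roots of $\Omega$, which together with $\Omega$ generate $\Gamma$ by Proposition~\ref{unit-roots}. Beyond this, the argument is a formal chase: each time a leg $\Omega$ or $\Omega'$ must be ``fattened'' to $\Gamma$, or a subsystem replaced by its cluster, one application of Theorem~\ref{Main Theorem} combined with $\mathcal{G}^I\subseteq\check{\mathbb{C}w}$ does the job, and the two target joins are forced by squeezing a candidate subsystem between two joins that Theorem~\ref{Main Theorem} declares equal.
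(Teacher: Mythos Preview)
Your proposal is correct and follows essentially the same route as the paper: both arguments rest on Proposition~\ref{tensor}, Theorem~\ref{Main Theorem}, the root computation giving $\mathcal G^I\subseteq\check{\mathbb C w}$, and the identity $\mathbb C\Omega\vee\mathbb C\Omega'=\Gamma$. The only organizational difference is that in the $\lambda>0$ case you first establish the symmetric description $\mathcal S=(\mathcal E\otimes\Gamma\otimes v)\vee(u\otimes\Gamma\otimes\mathcal F)$ and then apply Theorem~\ref{Main Theorem} once more with $\mathcal M=\mathbb C u\otimes\Gamma$, whereas the paper, after obtaining $\mathcal E\otimes\Gamma\otimes v\subseteq\mathcal S$, simply invokes the already-proved $\lambda=0$ case for the pair $(\mathcal E\otimes\Gamma,\mathcal F)$ with unit $u\otimes\Omega$ (implicitly using its unit-independence to replace $\Omega'$ by $\Omega$); your version is a bit more explicit, the paper's a bit more economical, but the content is the same.
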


\begin{proof}

 Let $\Gamma$ be the type I product system of index one. Choose units $\Omega$ and $\Omega^\prime$ of $\Gamma$ such that for all $t>0,$ $\langle \Omega_t,\Omega^\prime_t \rangle = e^{-\lambda t}.$

\textbf{Case 1} : $\lambda=0.$ We get $\Omega=\Omega^\prime,$ which implies $\mathcal E\otimes_{(u,v)}\mathcal F \simeq (\mathcal E\otimes v) \bigvee (u \otimes\mathcal F).$ So it is enough to show $\mathcal E^I \subset \check{u}.$ For any root $a$ of $u,$ it is easy to see that $a \in \check{u}.$ Now the result follows from Proposition \ref{unit-roots}.

\textbf{Case 2} : $\lambda >0.$ We get $\Omega \bigvee \Omega = \Gamma$ as both sides have index one. From Proposition \ref{tensor}, we get $\mathcal E\otimes_{(u,v,\lambda)}\mathcal F \simeq (\mathcal
E\otimes\Omega\otimes v)\bigvee (u\otimes\Omega^\prime\otimes\mathcal F).$ First note that $$u\otimes \Gamma \otimes v \subset (\mathcal
E\otimes\Omega\otimes v)\bigvee (u\otimes\Omega^\prime\otimes\mathcal F).$$ Now from case 1 and the fact that $\check{\Omega}=\Gamma$ (as $\Gamma$ is of type I),  we have $$(u\otimes \Gamma \otimes v) \bigvee (\mathcal E \otimes \Omega \otimes v) = [(u \otimes \Gamma) \bigvee (\mathcal E \otimes \Omega)] \otimes v = \mathcal E \otimes \Gamma \otimes v.$$ It follows that $$\mathcal E \otimes \Gamma \otimes v \subset (\mathcal
E\otimes\Omega\otimes v)\bigvee (u\otimes\Omega^\prime\otimes\mathcal F).$$ Now again applying the result of case 1 for two product systems $\mathcal E \otimes \Gamma$ and $\mathcal F$ with respective units $u\otimes \Omega$ and $v,$ we get \begin{eqnarray*} ((\mathcal E \otimes \Gamma) \otimes v) \bigvee ((u \otimes \Omega) \otimes \mathcal F) &=& ((\mathcal E \otimes \Gamma) \otimes \mathcal F^I) \bigvee ((\mathcal E \otimes \Gamma)^I \otimes \mathcal F) \\ &=& (\mathcal E\otimes\Gamma\otimes
 \mathcal F^I) \bigvee (\mathcal E^I\otimes\Gamma\otimes\mathcal F) \\ &\simeq & (\mathcal E\otimes_{(u,v)}\mathcal F) \otimes \Gamma. \end{eqnarray*}

\end{proof}

The key of the proof of our main theorem is the following lemma.

\begin{lemma}\label{X_s}
Suppose $(\mathcal E,W)$ is a product system and $\mathcal F$ is a product subsystem of $(\mathcal E,W).$ Set  $\mathcal X_t=\mathcal F^\prime_t \ominus \mathcal F_t,$ $t>0.$ Then \begin{align*} \mathcal F_s\circ \mathcal X_t \oplus \mathcal X_s\circ \mathcal F_t = \mathcal X_{s+t}. \end{align*}
\end{lemma}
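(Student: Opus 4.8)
The plan is to decompose $\mathcal X_{s+t}$ using the tensor factorization $\mathcal E_{s+t} = \mathcal E_s \circ \mathcal E_t$ and the definitions of $\tilde{\mathcal F}$ and $\mathcal F'$. First I would write $\mathcal E_r = \mathcal F_r \oplus (\mathcal E_r \ominus \mathcal F_r)$ for each $r$, so that $\mathcal E_s \circ \mathcal E_t$ splits as the orthogonal direct sum of four pieces: $\mathcal F_s \circ \mathcal F_t$, $\mathcal F_s \circ (\mathcal E_t \ominus \mathcal F_t)$, $(\mathcal E_s \ominus \mathcal F_s) \circ \mathcal F_t$, and $(\mathcal E_s \ominus \mathcal F_s) \circ (\mathcal E_t \ominus \mathcal F_t)$. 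By definition $\tilde{\mathcal F}_{s+t}$ contains the last piece (it is spanned by products $x \circ y$ with both factors in the orthocomplements, for \emph{some} intermediate point, in particular for the point $s$), so $\mathcal F'_{s+t} = \mathcal E_{s+t} \ominus \tilde{\mathcal F}_{s+t}$ is orthogonal to it; hence $\mathcal F'_{s+t} \subseteq \mathcal F_s \circ \mathcal F_t \oplus \mathcal F_s \circ (\mathcal E_t \ominus \mathcal F_t) \oplus (\mathcal E_s \ominus \mathcal F_s) \circ \mathcal F_t = \mathcal F_s \circ \mathcal E_t + \mathcal E_s \circ \mathcal F_t$. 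This is the "easy" containment direction.

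Next I would pin down which vectors in $\mathcal F_s \circ \mathcal E_t + \mathcal E_s \circ \mathcal F_t$ actually lie in $\mathcal F'_{s+t}$. The point is that $\tilde{\mathcal F}_{s+t}$ is built from \emph{all} intermediate points $r$, not just $r = s$, so being orthogonal to the $r=s$ piece is necessary but not obviously sufficient. However, one can use the known inclusion $\mathcal F'_{s+t} \subseteq \mathcal F'_s \otimes \mathcal F'_t$ (cited from \cite{BLM-root} in the excerpt) together with $\mathcal F_r \subseteq \mathcal F'_r$ to see that $\mathcal X_{s+t} = \mathcal F'_{s+t} \ominus \mathcal F_{s+t}$ sits inside $\mathcal F'_s \circ \mathcal F'_t$ and is orthogonal to $\mathcal F_{s+t} = \mathcal F_s \circ \mathcal F_t$. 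Writing $\mathcal F'_r = \mathcal F_r \oplus \mathcal X_r$, expand $\mathcal F'_s \circ \mathcal F'_t = \mathcal F_s \circ \mathcal F_t \oplus \mathcal F_s \circ \mathcal X_t \oplus \mathcal X_s \circ \mathcal F_t \oplus \mathcal X_s \circ \mathcal X_t$; removing the $\mathcal F_s \circ \mathcal F_t$ summand leaves $\mathcal X_{s+t} \subseteq \mathcal F_s \circ \mathcal X_t \oplus \mathcal X_s \circ \mathcal F_t \oplus \mathcal X_s \circ \mathcal X_t$. To finish the containment $\mathcal X_{s+t} \subseteq \mathcal F_s \circ \mathcal X_t \oplus \mathcal X_s \circ \mathcal F_t$ I must show $\mathcal X_{s+t}$ is orthogonal to $\mathcal X_s \circ \mathcal X_t$; but $\mathcal X_s \subseteq \mathcal E_s \ominus \mathcal F_s$ and $\mathcal X_t \subseteq \mathcal E_t \ominus \mathcal F_t$, so $\mathcal X_s \circ \mathcal X_t \subseteq \tilde{\mathcal F}_{s+t}$, which is orthogonal to $\mathcal F'_{s+t} \supseteq \mathcal X_{s+t}$. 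That gives one inclusion.

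For the reverse inclusion $\mathcal F_s \circ \mathcal X_t \oplus \mathcal X_s \circ \mathcal F_t \subseteq \mathcal X_{s+t}$, I would check two things: that these vectors are orthogonal to $\mathcal F_{s+t}$, and that they are orthogonal to $\tilde{\mathcal F}_{s+t}$. Orthogonality to $\mathcal F_{s+t} = \mathcal F_s \circ \mathcal F_t$ is immediate from $\mathcal X_t \perp \mathcal F_t$ and $\mathcal X_s \perp \mathcal F_s$. For orthogonality to $\tilde{\mathcal F}_{s+t}$, take a generating vector $x \circ y$ with $x \in \mathcal E_r \ominus \mathcal F_r$, $y \in \mathcal E_{s+t-r} \ominus \mathcal F_{s+t-r}$; the argument splits into the cases $r < s$, $r = s$, and $r > s$. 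In each case one refactors $\mathcal E_{s+t}$ through both $r$ and $s$ and uses that $x$ (or $y$) lies in an orthocomplement of $\mathcal F$ at a sub-interval, combined with the \emph{defining} property of $\mathcal F'$ at the \emph{other} scale — more precisely, $\mathcal F'_s \perp \tilde{\mathcal F}_s$ and $\mathcal X_s \subseteq \mathcal F'_s$, so $\mathcal X_s$ is orthogonal to products of two orthocomplement-vectors within $[0,s]$, and similarly on $[s, s+t]$ — to conclude the inner product vanishes.

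I expect the main obstacle to be precisely this last case analysis: showing $\mathcal F_s \circ \mathcal X_t \oplus \mathcal X_s \circ \mathcal F_t \perp \tilde{\mathcal F}_{s+t}$ when the cut point $r$ of the generating vector $x \circ y$ does not coincide with $s$. The clean cases ($r=s$) only use that $\mathcal X \perp \mathcal F$, but for $r \neq s$ one genuinely needs the cluster structure — i.e. that $\mathcal F'$ (hence $\mathcal X$) is, by construction, orthogonal to \emph{every} "two orthocomplement factors" product over any sub-subdivision, not merely the one at scale $s$ or $t$. Making this bookkeeping precise, while keeping track of which Hilbert space each tensor leg lives in under the various associativity identifications, is the delicate part; everything else is a routine orthogonal-decomposition computation.
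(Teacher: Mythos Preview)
Your proposal is correct and follows essentially the same route as the paper's proof: the inclusion $\mathcal X_{s+t}\subseteq \mathcal F_s\circ\mathcal X_t\oplus\mathcal X_s\circ\mathcal F_t$ is obtained exactly as you describe, via $\mathcal F'_{s+t}\subseteq\mathcal F'_s\circ\mathcal F'_t$ together with $\mathcal X_s\circ\mathcal X_t\subseteq\tilde{\mathcal F}_{s+t}$; and the reverse inclusion is proved by the very case split $r<s$, $r=s$, $r>s$ on the cut point of a generator of $\tilde{\mathcal F}_{s+t}$ that you anticipate, using the factorization $\mathcal F_s=\mathcal F_r\circ\mathcal F_{s-r}$ on one side and $\mathcal X_t\subseteq\mathcal F'_t\perp\tilde{\mathcal F}_t$ on the other. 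Your first paragraph is slightly redundant (its conclusion is subsumed by the stronger statement you derive in the second), but otherwise the argument matches the paper's.
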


\begin{proof}

Suppose $x\in \mathcal X_t.$ consider the set
$$ A:=\{(z_1\circ z_2):z_1\in \mathcal E_r\ominus \mathcal F_r,~z_2\in \mathcal E_{s+t-r}\ominus \mathcal F_{s+t-r}~,~ \mbox{for}~\mbox{some}~ r,~ 0<r<s+t \}.$$
Then we claim that $\overline{\mbox{span}}~A=\overline{\mbox{span}}~(A_1\cup A_2\cup A_3),$
where \begin{align*} A_1=\{(y_1\circ y_2\circ y_3): y_1 \in \mathcal E_r\ominus \mathcal F_r, ~
y_2\in \mathcal E_{s-r},~ y_3 \in \mathcal E_t,~ \\ y_2\circ y_3 \in \mathcal E_{s+t-r}\ominus \mathcal F_{s+t-r},~
\mbox{for}~\mbox{some} ~0<r<s\},\end{align*}  \begin{align*} A_2=\{(y_1\circ y_2\circ
y_3): y_1 \in \mathcal E_s, y_2 \in \mathcal E_{r-s},~ y_1\circ y_2 \in \mathcal E_r \ominus \mathcal F_r, \\
y_3 \in \mathcal E_{s+t-r} \ominus \mathcal F_{s+t-r} ,~ \mbox{for}~\mbox{some} ~ s<r<s+t\} \end{align*} and $$ A_3=\{z_1\circ z_2: z_1 \in \mathcal E_s\ominus \mathcal F_s, z_2 \in \mathcal E_t \ominus \mathcal F_t\}.$$ Suppose $y_1\circ y_2\circ y_3 \in A_1.$ This implies $y_1 \in \mathcal E_r \ominus \mathcal F_r$ and $y_2\circ y_3 \in \mathcal E_{s+t-r}\ominus \mathcal F_{s+t-r}.$ This shows $y_1\circ y_2\circ y_3 \in A.$ We obtain $A_1\subset A.$ Similarly, $A_2, A_3 \subset A.$  We obtain, $\overline{\mbox{span}}~A\supset \overline{\mbox{span}}~(A_1\cup A_2 \cup A_3).$ For the converse, let $z_1\circ z_2\in A,$ with $z_1\in \mathcal E_r\ominus \mathcal F_r, z_2\in \mathcal E_{s+t-r}\ominus \mathcal F_{s+t-r}$ for some  $0<r<s.$ This implies $z_2 \in \overline{\mbox{span}} \{x_1\circ x_2: x_1\in \mathcal E_{s-r}, x_2\in \mathcal E_t, ~ x_1\circ x_2 \in \mathcal E_{s+t-r}\ominus \mathcal F_{s+t-r}\}.$ Clearly $z_1\circ x_1\circ x_2 \in A_1.$ We get $z_1\circ z_2 \in \overline{\mbox{span}}~A_1.$  Similarly, for $z_1\circ z_2 \in A$ with $z_1\in \mathcal E_r \ominus \mathcal F_r,$ $z_2\in \mathcal E_{s+t-r}\ominus \mathcal F_{s+t-r}$ for some $s<r<s+t,$ we have $z_1\circ z_2\subset \overline{\mbox{span}}~A_2.$  Therefore $\overline{\mbox{span}}~A\subset \overline{\mbox{span}}~(A_1\cup A_2\cup A_3).$ This proves the claim. Now let
$ y_1\circ y_2\circ y_3\in A_1$ be an arbitrary vector. Then
there is some $r_0,$ $0<r_0<s,$ such that $
y_1 \in \mathcal E_{r_0}\ominus \mathcal F_{r_0}, y_2 \in \mathcal E_{s-r_0},~ y_3 \in \mathcal E_t, y_2 \circ y_3 \in \mathcal E_{s+t-r_0} \ominus \mathcal F_{s+t-r_0}.$ Any vector $f_s \in \mathcal F_s$ is in the closed linear span of the vetors of the form $g^i_{r_0}\circ h^i_{s-r_0},$ where  $g^i_{r_0} \in \mathcal F_{r_0},$ $h^i_{s_0} \in \mathcal F_{s-r_0}.$  \allowdisplaybreaks{\begin{align*} \langle g^i_{r_0}\circ h^i_{s-r_0}\circ
x,y_1\circ y_2\circ y_3\rangle &= \sum_i\langle
g^i_{r_0},y_1\rangle\langle h^i_{s-r_0},y_2\rangle\langle x,y_3\rangle \\
&= 0.\end{align*}} %
This shows that $f_s\circ x \in {A_1}^\bot.$ Now let $y_1\circ y_2\circ y_3\in A_2$ be arbitrary. Then there is some $r_1,$ $s<r_1<s+t,$ such that $ y_1 \in \mathcal E_s, y_2 \in \mathcal E_{r_1-s},~ y_1\circ y_2 \in \mathcal E_{r_1} \ominus \mathcal F_{r_1} ,  y_3 \in \mathcal E_{s+t-r_1}\ominus \mathcal F_{s+t-r_1}.$ Now if
$y_1 \in \mathcal E_s \ominus \mathcal F_s,$ then for $f_s \in \mathcal F_s,$ the inner product $\langle
f_s\circ x,y_1\circ y_2\circ y_3\rangle=0$ and if $\langle
f_s,y_1\rangle\neq 0,$ then $y_2 \in \mathcal E_{r_1-s}\ominus \mathcal F_{r_1-s}$ and
$y_3\in \mathcal E_{s+t-r_1}\ominus \mathcal F_{s+t-r_1}.$ This is equivalent to
$y_2\circ y_3 \in \tilde{\mathcal F_t}.$ As $x\in \mathcal X_t \subset \mathcal F^\prime_t,$ the inner
product $\langle f_s\circ x,y_1\circ y_2\circ y_3\rangle=0.$ This shows $f_s \circ x \in {A_2}^\bot.$ For $z_1\otimes z_2\in A_3,$ it is easily seen that $\langle f_s\circ x, z_1\circ z_2 \rangle=0.$ Thus for arbitrary vector $z \in \overline{\mbox{span}}A,$ we have
$\langle f_s\circ x, z\rangle=0.$ Hence $f_s\circ x \in
\mathcal F^\prime_{s+t}.$ It is trivial to see that $f_s\circ x \in \mathcal E_{s+t}\ominus \mathcal F_{s+t}.$ We get $\mathcal F_s \circ \mathcal X_t \subset \mathcal X_{s+t}.$ Similarly $\mathcal X_s\circ \mathcal F_t\subset
\mathcal X_{s+t}.$

 On the other hand, we claim that
$$\mathcal X_{s+t} \supset \mathcal F_s\circ \mathcal X_t \oplus \mathcal X_s \circ \mathcal F_t.$$ Indeed, we observe that, the inclusion  $\mathcal F^\prime_{s+t}\subset \mathcal F^\prime_{s}\circ \mathcal F^\prime_t$ implies $$\mathcal X_{s+t}\subset \mathcal X_s \circ \mathcal F_t \oplus \mathcal F_s\circ \mathcal X_t \oplus \mathcal X_s\circ \mathcal X_t.$$ So it is enough to show that $\mathcal X_{s+t}\subset \mathcal E_{s+t}\ominus (\mathcal X_s\circ \mathcal X_t).$ But this follows from the fact that $\mathcal X_s\circ \mathcal X_t \subset \tilde{\mathcal F}_{s+t}.$

\end{proof}

\textbf{Proof of Theorem \ref{Main Theorem}} : It is enough to prove $ \mathcal E \otimes \check{\mathcal N} \subset \mathcal E \otimes \mathcal N \bigvee \mathcal M\otimes \mathcal F .$ By symmetry, the result follows. Fix the time point $t=1.$ It is enough to show that for $z\in \mathcal E_1,$
and  for $\eta\in \mathcal Y_1 := \mathcal N^\prime_1 \ominus \mathcal N_1,$ $z\otimes \eta \in
((\mathcal E\otimes \mathcal N)\bigvee (\mathcal M \otimes\mathcal F))_1.$ For
other time point, proof goes identically. Let $\epsilon > 0$ be
given. From uniform continuity
of $P^\mathcal M_{s,t},$ choose $N$ such that $n \geq N,$ $\|
z-P^\mathcal M_{\frac{i-1}{n},\frac{i}{n}}z\| \leq \frac{\epsilon}{\|\eta\|},$ for every $i=1,2,\cdots,n.$ From Lemma \ref{X_s}, the following decomposition holds : $\mathcal Y_1 = \oplus^n_{i=1} \mathcal Z_i ,$ where $\mathcal Z_i = \mathcal N_{\frac{1}{n}}\circ \mathcal N_{\frac{1}{n}}\circ \cdots \circ \mathcal Y_{\frac{1}{n}}\circ \cdots \circ \mathcal N_{\frac{1}{n}}$ with $\mathcal Y_{\frac{1}{n}}$ is on the $i$-th place.  Let $\eta=\oplus_i \eta_i$ be the corresponding (orthogonal) decomposition. Note that $\eta_i$  is in the closed linear span of elementary tensors of the form $P=p^1\circ p^2\circ\cdots \circ q\circ\cdots \circ p^n$ with $p^j\in N_{\frac{1}{n}}$ for $j\neq i$ and $q\in \mathcal Y_{\frac{1}{n}}.$ Also $P^\mathcal M_{\frac{i-1}{n},\frac{i}{n}}z$ is in the closed linear span of elementary tensors of the form $W=w^1\circ w^2\circ \cdots \circ v\circ \cdots \circ w^n$ with $w^j\in \mathcal E_{\frac{1}{n}}$ for $j\neq i$ and $v\in \mathcal M_{\frac{1}{n}}.$ Now note that \begin{eqnarray*}  W \otimes P &=&  (w^1\circ \cdots \circ v\circ \cdots \circ w^n) \otimes  (p^1\circ \cdots \circ q \circ \cdots \circ p^n)  \\ &=& (w^1 \otimes p^1) \circ \cdots \circ (v \otimes q) \circ \cdots \circ (w^n\otimes p^n)  \\ & \in & ((\mathcal E\otimes \mathcal N)\bigvee (\mathcal M \otimes\mathcal F))_1. \end{eqnarray*}  This gives us $P^\mathcal M_{\frac{i-1}{n},\frac{i}{n}}z \otimes \eta_i \in  ((\mathcal E\otimes \mathcal N)\bigvee (\mathcal M \otimes\mathcal F))_1. $ Now \begin{eqnarray*} \|z\otimes \eta - \sum^n_{i=1} P^\mathcal M_{\frac{i-1}{n},\frac{i}{n}}z \otimes \eta_i \|^2 &=& \sum^n_{i=1} \| (z-P^\mathcal M_{\frac{i-1}{n},\frac{i}{n}}z)\otimes \eta_i \|^2 \\  & < &  \sum^n_{i=1} \frac{\epsilon^2 \|\eta_i\|^2}{\|\eta \|^2} \\ & < & \epsilon^2 . \end{eqnarray*}   The result follows as the subspace is closed.  \qed


{\bf Acknowledgement.} I thank Professor B.V. Rajarama Bhat for several useful discussions on the subject. I also thank DST-Inspire (IFA-13 MA-20) for financial support.

\bibliographystyle{amsplain}

\end{document}